\begin{document}
\newtheorem{theorem}{Theorem} [section]
\newtheorem{tha}{Theorem}
\newtheorem{conjecture}[theorem]{Conjecture}
\renewcommand{\thetha}{\Alph{tha}}

\newtheorem{corollary}[theorem]{Corollary}
\newtheorem{lemma}[theorem]{Lemma}
\newtheorem{proposition}[theorem]{Proposition}
\newtheorem{construction}[theorem]{Construction}
\newtheorem{claim}[theorem]{Claim}
\newtheorem{definition}[theorem]{Definition}
\newtheorem{observation}{Observation}
\newtheorem{question}{Question}
\newtheorem{remark}[theorem]{Remark}
\newtheorem{algorithm}[theorem]{Algorithm}
\newtheorem{example}[theorem]{Example}
\newtheorem{problem}[theorem]{Problem}

\def\endproofbox{\hskip 1.3em\hfill\rule{6pt}{6pt}}
\newenvironment{proof}%
{%
\noindent{\it Proof.}
}%
{%
 \quad\hfill\endproofbox\vspace*{2ex}
}
\def\qed{\hskip 1.3em\hfill\rule{6pt}{6pt}}

\parindent=15pt

\def\ce#1{\lceil #1 \rceil}
\def\fl#1{\lfloor #1 \rfloor}
\def\dim{{\rm dim}}
\def \e{\epsilon}
\def\vec#1{{\bf #1}}
\def\va{\vec{a}}
\def\vb{\vec{b}}
\def\vc{\vec{c}}
\def\vx{\vec{x}}
\def\vy{\vec{y}}
\def\cA{{\mathcal A}}
\def\cB{{\mathcal B}}
\def\cC{{\mathcal C}}
\def\cD{{\mathcal D}}
\def\cM{{\mathcal M}}
\def\cH{{\cal H}}
\def\cF{{\mathcal F}}
\def\bE{\mathbb{E}}
\def\bP{\mathbb{P}}
\def\cA{{\mathcal A}}
\def\cB{{\mathcal B}}
\def\cJ{{\mathcal J}}
\def\cM{{\mathcal M}}
\def\cS{{\mathcal S}}
\def\cT{{\mathcal T}}
\def\wt#1{\widetilde{#1}}
%%%%%%%%%%%%%%%%%%%%%%%%%%%%%%%%%%%%%%%%%%%%%%%%%%%%%%%%%%%%%

\title{On the Bandwidth of the Kneser graph}
\author{ Tao Jiang \thanks{ Dept. of Mathematics, Miami University,
Oxford, OH 45056, USA, jiangt@miamioh.edu} \,
Zevi Miller \thanks{Dept. of
Mathematics, Miami University, Oxford, OH 45056, USA,
millerz@miamioh.edu}\, Derrek Yager \thanks{Dept. of Mathematics, University of Illinois, 
Champaign-Urbana, Illinois, yager2@illinois.edu} }

\date{November 30, 2015}
\maketitle

\begin{abstract}

Let $G = (V,E)$ be a graph on $n$ vertices and $f: V\rightarrow [1,n]$ a one to one map of $V$ onto the integers $1$ 
through $n$.  Let $dilation(f) =$ max$\{ |f(v) - f(w)|: vw\in E \}$.  Define the {\it bandwidth} $B(G)$ of $G$ to be the minimum possible 
value of $dilation(f)$ over all such one to one maps $f$.  Next define the {\it Kneser Graph} $K(n,r)$ to be the graph with vertex set $\binom{[n]}{r}$, 
the collection of $r$-subsets of an $n$ element set, and edge set $E = \{ vw: v,w\in \binom{[n]}{r}, v\cap w = \emptyset  \}$.  For fixed $r\geq 4$ and 
$n\rightarrow \infty$ we show that 
$$B(K(n,r)) = \binom{n}{r} - \frac{1}{2}\binom{n-1}{r-1} - 2\frac{n^{r-2}}{(r-2)!} + (r + 2)\frac{n^{r-3}}{(r-3)!} + O(n^{r-4}).$$    

\end{abstract}

%%%%%%%%%%%%%%%%%%%%%%%%%%%%%%%%%%%%%%%%%%%%%%%%%%%%%%%%%%%%%%%%%%%%%%%%%%%%%
\section{Introduction}\label{intro}

We begin with some notation.  Let $[n] = \{ 1,2,3,\cdots, n\}$, which we view as our canonical set of 
size $n$.  Let $\binom{[n]}{r}$ be the collection of $r$-subsets of $[n]$, 
and for $k\leq r$ let $\binom{[k]}{r}$ denote the collection of all $r$-subsets of an 
arbitrary set $S\subset [n]$ of size $k$ (so $S$ is not necessarily $ \{ 1,2,3,\cdots, k\}$).    
For integers $a < b$ we let $[a,b]$ denote the set of integers $x$ satisfying $a\le x\le b$.  

Now let $\mathcal{A}$ and $\mathcal{B}$ be two families of subsets of $[n]$.  We say $\mathcal{A}$ is  
{\it intersecting} if $A_{1}\cap A_{2} \ne \emptyset$ for all pairs $A_{1}, A_{2}\in \mathcal{A}$.  Further $\mathcal{A}$ is {\it nontrivial} 
if $\cap_{A\in \mathcal{A}}A = \emptyset$, and is {\it trivial} otherwise.  
The pair of families $\mathcal{A}$, $\mathcal{B}$ 
 is {\it cross intersecting} if $A\cap B\ne \emptyset$ for all pairs of sets $A,B$, where $A\in \mathcal{A}$ and $B\in \mathcal{B}$.  A 
{\it matching} of $\mathcal{A}$ is a collection of sets in $\mathcal{A}$ that are pairwise disjoint. 
For $S\subset [n]$ 
we let $V(S) = \{x\in [n]:  x\in S \}$, and we let $V(\mathcal{A}) = \cup_{S\in \mathcal{A}}V(S)$ (the vertex 
set of $\mathcal{A}$).  We sometimes refer to the members of $\mathcal{A}$ as {\it edges} of $\mathcal{A}$.     
We will use standard 
graph theoretic or combinatorial notation, as may be found for example in \cite{W}.  Additional notation will be defined where it is initially used in the text.

Now define the the 
{\it Kneser Graph} $K(n,r)$ to be the graph with vertex set $V = \binom{[n]}{r}$, 
and edge set $E = \{ vw: v,w\in \binom{[n]}{r}, v\cap w = \emptyset  \}$.  We can suppose that $n\geq 2r$ since otherwise $K(n,r)$ 
has no edges.  Clearly $K(n,r)$ has $\binom{n}{r}$ vertices, is regular of degree $\binom{n-r}{r}$, and it can be shown that it is both vertex and edge transitive.  
The Kneser Graph arises in 
several examples; $K(n,1)$ is just the complete graph $K_{n}$ on $n$ vertices, $K(n,2)$ is the complement of the line graph of $K_{n}$, 
$K(2n-1,n-1)$ is also known as the odd graph $O_{n}$, and $K(5,2)$ is isomorphic to the Petersen graph.  The diameter of $K(n,r)$ was shown to be 
$\lceil \frac{r-1}{n-2r} \rceil + 1$ in \cite{Val}, and $K(n,r)$ was shown to be Hamiltonian for $n\geq \frac{1}{2}( 3r + 1 + \sqrt{5r^{2} -2r +1} )$ in \cite{Chen}.   

A longstanding problem on $K(n,r)$ was Kneser's conjecture; that the chromatic number satisfies 
$\chi(K(n,r)) = n -2r + 2$ if $n\geq 2r$ and of course $\chi(K(n,r)) = 1$ otherwise.  The upper bound is achieved by a simple 
coloring; color an $r$-set by its largest element if this element is at least $2r$, and otherwise color it by $1$.  The difficulty was in proving the 
corresponding lower bound, and this result was first proved by Lovasz \cite{L} using methods of algebraic topology.  More elementary, but still topological, proofs were given 
by B\'{a}r\'{a}ny \cite{Ba} soon after, and by Dol'nikov \cite{Do} and Greene \cite{Gr} later.  A mostly combinatorial proof (still with topological elements) was given by Matou\v{s}ek \cite{Ma}.

Recently some results on a labeling problem relating to $K(n,r)$ appeared in the literature \cite{JPP}.  Let $G = (V,E)$ be a graph on $n$ vertices 
and $f: V\rightarrow C_{n}$ a one to one map of the vertices of $G$ to the cycle $C_{n}$ on $n$ vertices.  Let $|f| = $ min$\{ dist_{C_{n}}(f(u), f(v)): uv\in E \}$, where 
$dist_{C_{n}}$ denotes the distance function on $C_{n}$; that is, $dist_{C_{n}}(x,y)$ is the mod $n$ distance between $x$ and $y$ when we view 
the vertices of $C_{n}$ as the integers mod $n$.  Now let $s(G) = $ max$\{ |f| \}$, where the maximum is taken over all such one to one maps $f$.  It is shown 
in \cite{JPP} that $s(K(n,2)) = 3$ when $n\geq 6$, that $s(K(n,3)) = 2n-7$ or $2n-8$ for $n$ sufficiently large, and that for fixed $r\geq 4$ and $n$ sufficiently large we have 
$\frac{2n^{r-2}}{(r-2)!} - \frac{(\frac{7}{2}r - 2)n^{r-3}}{(r-3)!} - O(n^{r-4})\le s(K(n,r))\le \frac{2n^{r-2}}{(r-2)!} - \frac{(\frac{7}{2}r - 3.2)n^{r-3}}{(r-3)!} + o(n^{r-3}).$

This paper considers the following related well known labeling problem.  Let $G = (V,E)$ be a graph on $n$ vertices.  Now consider $f: V\rightarrow [1,n]$ a 
one to one map, and let $dilation(f) =$ max$\{ |f(v) - f(w)|: vw\in E \}$.  Define the {\it bandwidth} $B(G)$ of $G$ to be the minimum possible 
value of $dilation(f)$ over all such one to one maps $f$.  There is an extensive literature on the bandwidth of graphs and related layout problems (see \cite{DPS} for a survey), 
originally motivated by the attempt 
to find fast algorithms for matrix operations, and by problems in VLSI design.  Our main result is the following.

\begin{theorem}  \label{main theorem}
Let $r\geq 4$ be fixed integer.  As $n\rightarrow \infty$ we have
$$B(K(n,r)) = \binom{n}{r} - \frac{1}{2}\binom{n-1}{r-1} - 2\frac{n^{r-2}}{(r-2)!} + (r + 2)\frac{n^{r-3}}{(r-3)!} + O(n^{r-4}).$$ 
\end{theorem}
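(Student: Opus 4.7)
The plan is to establish matching upper and lower bounds for $B(K(n,r))$ up to $O(n^{r-4})$, and I will outline both directions separately.

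For the \textbf{upper bound} I would construct an explicit labeling of $V(K(n,r))$ exploiting the intersecting-family structure. Because the $\binom{n-1}{r-1}$ sets containing element $1$ form an independent set in $K(n,r)$, I would split them into two almost equal halves, place one at the lowest $a \approx \tfrac{1}{2}\binom{n-1}{r-1}$ labels, the other at the highest $a$ labels, and place the $\binom{n-1}{r}$ sets avoiding $1$ in the middle block. Since every edge of $K(n,r)$ has at least one endpoint in the middle block, the dilation is at most $a + \binom{n-1}{r} - 1 = \binom{n}{r} - \tfrac{1}{2}\binom{n-1}{r-1} - 1$, matching the leading term. To produce the corrections $-2\tfrac{n^{r-2}}{(r-2)!}$ and $(r+2)\tfrac{n^{r-3}}{(r-3)!}$, I would recurse: the middle block is a copy of $K(n-1,r)$ on $\binom{[2,n]}{r}$, so the same scheme using element $2$ applies internally, positioning the ``star of $2$'' (among sets avoiding $1$) near the interior interfaces. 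Iterating this nesting and carefully balancing block sizes via identities like $\binom{n-1}{r} = \binom{n-2}{r-1} + \binom{n-2}{r}$ yields the stated asymptotic.

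For the \textbf{lower bound} I would use the basic constraint that $|f(u) - f(v)| > b$ forces $u \cap v \neq \emptyset$. Setting $m := \binom{n}{r} - b - 1$, the first $m$ and last $m$ vertices of the labeling form disjoint families $\cA, \cB \subseteq \binom{[n]}{r}$ that must satisfy a triangular cross-intersection pattern: ordering $\cA = \{a_1,\dots,a_m\}$ and $\cB = \{b_1,\dots,b_m\}$ in the labeling, every pair $(a_i, b_{j'})$ with $j' \geq i$ must intersect. Combining this with Erd\H os--Ko--Rado--type stability for near-intersecting families in $\binom{[n]}{r}$, the goal is to show that $\cA \cup \cB$ is, up to a defect of order $n^{r-2}$, contained in a single star $\{v : x \in v\}$. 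Disjointness of $\cA, \cB$ then gives $2m = |\cA| + |\cB| \leq \binom{n-1}{r-1} + O(n^{r-2})$, hence $b \geq \binom{n}{r} - \tfrac{1}{2}\binom{n-1}{r-1} - O(n^{r-2})$. A quantitative refinement of the stability estimate tracks the deeper terms and yields the full matching bound.

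The \textbf{main obstacle} is the lower bound, specifically the step from the triangular cross-intersection constraint to the conclusion ``$\cA \cup \cB$ is essentially contained in a star.'' A plain application of Hilton's cross-intersecting bound $|\cA|+|\cB| \leq \binom{n}{r} - \binom{n-r}{r} + 1$ yields only $b \geq \binom{n-r}{r} - 1 \approx \binom{n}{r} - r\binom{n-1}{r-1}$, missing the constant $\tfrac{1}{2}$ by a factor of order $r$. To recover the correct constant one must exploit the balance $|\cA| = |\cB| = m$ together with the linear ordering imposed by the labeling, ruling out (up to lower-order defects) cross-intersecting pairs of balanced $r$-set families whose union is not essentially a star. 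Pushing this stability analysis down to the $O(n^{r-4})$ error term — classifying all small perturbations of a star configuration in both directions and matching them with the analogous small adjustments in the construction — is the delicate combinatorial heart of the proof.
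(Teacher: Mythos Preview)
Your high-level outline is right---split the star at $1$ for the upper bound, and show the extremal segments are essentially a star for the lower bound---but both halves of your plan stop short of the ideas that actually pin down the $-2\frac{n^{r-2}}{(r-2)!}$ and $(r+2)\frac{n^{r-3}}{(r-3)!}$ terms.

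For the upper bound, the paper does not recurse. A recursion on the middle block as a copy of $K(n-1,r)$ controls edges inside the middle, but the edges that determine the dilation are precisely those crossing between the outer star-halves and the middle, so a bandwidth bound on the middle alone is not what you need. The paper instead builds a single explicit layout of depth three: it partitions $S_1', S_1'', S_2, S_3$ further into blocks of the form $S_{ij}$ and $S_{ijk}$ (sets with prescribed smallest two or three elements) and orders them so that each left block $F=[x,y]$ has a ``right blocker'' $G$---a terminal interval cross-intersecting $F$---with $|G|+x = 1 + |S_1''\cup S_{ab}\cup S_{cd}\cup S_{rst}\cup S_{r's't'}|$ where $b+d=7$. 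That numerical identity is what produces the coefficient $r+2$. Your recursive sketch gives no mechanism for such a balance, and it is not clear a naive iteration would hit it.

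For the lower bound, the paper does prove (your Claim) that $\cA\cup\cB$ lies in a single star, via F\"uredi's cross-intersecting extension of Hilton--Milner rather than generic stability. But that only gives the leading $\frac{1}{2}\binom{n-1}{r-1}$ term. The secondary terms come from a separate argument you have not anticipated: one looks slightly past $\cA$ into a family $\cA_1(u,v)$ of size roughly $a(u,v)+\binom{n-3}{r-3}$, where $a(u,v)$ counts members of $\cA$ hitting $\{u,v\}$, and shows via matching and $2$-intersecting bounds that $\cA_1(u,v)$ cannot cross-intersect any $\mathcal{X}\subseteq\overline{\cS_1}$ of size exceeding $\binom{n-3}{r-3}+r^2\binom{n-4}{r-4}$. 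This yields two dilation inequalities (one from each end), and \emph{averaging} them against the bound $a(u,v)+b(u',v')\le \binom{n-1}{r-1}-\binom{n-5}{r-1}$ is what produces the matching $(r+2)$ coefficient. Your ``triangular cross-intersection'' and ``quantitative stability refinement'' do not point toward this averaging trick, which is the combinatorial heart of the lower bound.
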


We observe that there is the trivial upper bound $B(K(n,r))\le \binom{n}{r} - \frac{1}{2}\binom{n-1}{r-1}$, as follows.  Let $\beta(G)$ be 
the maximum possible size of an independent set of vertices in any graph $G$ on $N$ vertices.  
Then $B(G)\le N - \lfloor  \frac{1}{2}\beta(G) \rfloor$, achieved by a one to one map $f: V(G)\rightarrow [1,N]$ which sends any half of the vertices of a 
maximum independent set $S$ to $[1, \lfloor  \frac{1}{2}\beta(G) \rfloor]$, the other half of $S$ to 
$[N- \lceil \frac{1}{2}\beta(G)  \rceil+1, N]$, and the remainder of $V(G)$ arbitrarily to the rest of the interval $[1,N]$.  Now an independent set in $K(n,r)$ 
is just an intersecting family in $\binom{[n]}{r}$, and by the Erdos, Ko, Rado theorem \cite{EKR} the maximum size of such a family is $\binom{n-1}{r-1}$.  It follows 
that  $B(K(n,r))\le \binom{n}{r} - \frac{1}{2}\binom{n-1}{r-1}$.  

Our contribution here is to precisely determine $B(K(n,r))$ for fixed $r$ and $n$ growing, up to an $O(n^{r-4})$ error term.  With this in mind, we will occasionally state inequalities 
involving $n$ and $r$ which are true when $n$ is large enough relative to the fixed $r$.  In these cases we will often not state this requirement on $n$ and $r$ explicitly.

%%%%%%%%%%%%%%%%%%%%%%%%%%%%%%%%%%%%%%%%
\section{The lower bound}

Our goal in this section is to prove the lower bound $B(K(n,r)) \geq \binom{n}{r} - A$, where $A = \frac{1}{2}\binom{n-1}{r-1} + 2\frac{n^{r-2}}{(r-2)!} - (r + 2)\frac{n^{r-3}}{(r-3)!} + O(n^{r-4})$; 
that is, to show that $dilation(f)\geq \binom{n}{r} - A$ for any one to one map $f: V(K(n,r))\rightarrow [1, \binom{n}{r}]$.    

As notation, for two families of $r$-sets $\cA,\cB\subset \binom{[n]}{r}$, let us write $\cA\sim \cB$ to mean that there is 
some $S\in \cA$ and  $T\in \cB$ such that $S\cap T = \emptyset$.  So $\cA\sim \cB$ says that $\cA$ and $\cB$ are not cross intersecting, or equivalently that there are vertices $S,T$ of $K(n,r)$, where 
$S\in \cA$ and  $T\in \cB$, such that $ST\in E(K(n,r))$.  Roughly speaking we will be showing that for any one to one map $f: V(K(n,r))\rightarrow [1, \binom{n}{r}]$ there is 
an initial (resp. final) subinterval $I$ (resp. $F$) of $[1, \binom{n}{r}]$, with $|I| + |F|$ 
reasonably small, such that $f^{-1}(I)\sim f^{-1}(J)$.  This forces a ``long" edge $ST$; that is one satisfying $|f(S) - f(T)|\geq \binom{n}{r} - (|I| + |F|)$, and leads to our lower bound 
on $B(K(n,r))$.

We discuss briefly the relation between our lower bound proof and existing results in the literature on cross intersecting families. 
Now $dilation(f)\geq \binom{n}{r} - A$ is equivalent to at least one of statements $f^{-1}(j)\sim f^{-1}([\binom{n}{r} - A + j, \binom{n}{r}])$, $1\le j\le A$, being true.   This is in turn equivalent to  
at least one of the statements $f^{-1}([1,j])\sim f^{-1}([\binom{n}{r} - A + j, \binom{n}{r}])$, $1\le j\le A$, being true.  In these and also other cases
we will be interested in proving $\cA\sim \cB$ for certain pairs $\cA,\cB$ of families of subsets of $[n]$.  There are many results in the 
literature which say that if $\cA,\cB$ are cross intersecting families (possibly satisfying additional conditions), then $|\cA| + |\cB|\le U(n)$ or $|\cA||\cB|\le T(n)$ for suitable functions 
U and T.  We mention the papers \cite{HM}, \cite{FT}, and \cite{FranklTok} as examples of the large literature containing such bounds.  
If we could show that the families $\cA,\cB$ for which we try to prove $\cA\sim \cB$ violate these bounds; that is, $|\cA| + |\cB| > U(n)$ for example, then we could conclude that  $\cA,\cB$ could not be cross intersecting and hence  
that $\cA\sim \cB$ as desired.  In the examples just cited, the bounds were 
too generous for our purposes, and to the best of our knowledge the same holds for the other published results of this type.
Indeed in our setting, we will be dealing with
cross-intersecting families $\cA,\cB$ where one of them contains a large matching,
which substantially restricts $|\cA|+|\cB|$.

We now proceed to our lower bound result. We will use a few results from the literature. The following extension
of the Hilton-Milner theorem on intersecting families to cross-intersecting families was established by F\"uredi \cite{F-cross-int}.

\begin{theorem} {\rm \cite{F-cross-int}} \label{cross-intersecting}
Let $n,a,b$ be positive integers where $n\geq a+b$. Let $\cA\subseteq \binom{[n]}{a}$
and $\cB\subseteq \binom{[n]}{b}$, and suppose that $\cA$ and $\cB$ are cross-intersecting.  
 If $|\cA|\geq \binom{n-1}{a-1}-\binom{n-b-1}{a-1}+1$ and $|\cB|>\binom{n-1}{b-1}-\binom{n-a-1}{b-1}+1$, then there exists an element $x\in [n]$ that lies in all members of $\cA$ and $\cB$.  
 That is; $\cA \cup \cB$ is a trivial family.
\end{theorem}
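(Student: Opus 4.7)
My plan is to proceed by contradiction: assume $\cA, \cB$ are cross-intersecting with the stated size lower bounds, but that $\cA \cup \cB$ has no common element, i.e., $\bigcap_{X \in \cA \cup \cB} X = \emptyset$.

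The first step rules out the easy case where $\cA$ itself is trivial. If $x \in \bigcap_{A \in \cA} A$, then non-triviality of $\cA \cup \cB$ gives some $B_0 \in \cB$ with $x \notin B_0$. Every $A \in \cA$ contains $x$ and, by cross-intersection, must meet $B_0$; the count of such $a$-sets is $\binom{n-1}{a-1} - \binom{n-b-1}{a-1}$ (all $a$-sets through $x$ minus those through $x$ disjoint from $B_0$), contradicting the lower bound on $|\cA|$. Symmetrically, a non-empty $\bigcap_{B \in \cB} B$ yields the same calculation on the $\cB$ side and contradicts the lower bound on $|\cB|$.

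So both $\cA$ and $\cB$ are non-trivial. Here I would apply the shifting operation $S_{ij}$ ($i<j$), which replaces each $F \ni j, F \not\ni i$ by $(F\setminus\{j\})\cup\{i\}$ when the latter is not already in the family. Shifting preserves sizes and the cross-intersecting property, and after iteration both $\cA$ and $\cB$ become left-compressed. The key structural fact is that in a shifted family of $k$-sets, if some $F$ omits $1$ then by pointwise domination $\{2, \ldots, k+1\}$ lies in the family. Assuming the shifted pair still lacks $1$ as a common element, this yields canonical witnesses $A^* = \{2, \ldots, a+1\} \in \cA$ and $B^* = \{2, \ldots, b+1\} \in \cB$ (the existence of one such witness forces the other via the size hypotheses). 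Cross-intersecting $\cA$ with $B^*$ bounds $|\{A \in \cA : 1 \in A\}| \leq \binom{n-1}{a-1} - \binom{n-b-1}{a-1}$ (all $a$-sets through $1$ minus those disjoint from $B^*$), and symmetrically for $\cB$; the size hypotheses therefore force non-empty ``off-$1$'' subfamilies $\cA_0 \subseteq \binom{[2,n]}{a}$ and $\cB_0 \subseteq \binom{[2,n]}{b}$ that remain cross-intersecting on the smaller ground set $[2,n]$, and one iterates the argument inductively on $n$.

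The main obstacle is controlling the off-$1$ pieces: the direct size lower bounds $|\cA_0| \geq 1$ and $|\cB_0| \geq 2$ are too weak to re-apply the theorem verbatim on $[2,n]$. One resolves this by additionally exploiting that $\cA_0$ must be cross-intersecting with the $(b-1)$-set family $\{B \setminus \{1\} : B \in \cB,\ 1 \in B\}$, gaining the leverage needed to close the induction. A further subtlety is that shifting may inadvertently create a common element, collapsing individual non-triviality; each such collapse must be rerouted back through the Case-1 style contradiction, which remains applicable since the shifted pair inherits the size hypotheses. An alternative, following F\"uredi's original approach, is to dispense with shifting entirely and instead double-count intersecting pairs $(A,B) \in \cA \times \cB$ weighted by pivot element, producing the sharp extremal constants $\binom{n-b-1}{a-1}$ and $\binom{n-a-1}{b-1}$ directly from the combinatorial structure of cross-intersecting families.
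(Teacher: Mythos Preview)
The paper does not give its own proof of this theorem: it is quoted verbatim as a result of F\"uredi \cite{F-cross-int} and used as a black box inside the proof of Theorem~\ref{lower-bound}. So there is no ``paper's proof'' to compare your attempt against.

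That said, your sketch has a structural flaw worth naming. The conclusion of the theorem --- that some fixed element lies in every member of $\cA\cup\cB$ --- is \emph{not} invariant under shifting. You argue by contradiction, assume the original $\cA\cup\cB$ has no common element, reduce to both $\cA$ and $\cB$ nontrivial, and then shift. But after shifting you have no control over whether the shifted pair $\cA',\cB'$ still lacks a common element; it may well acquire one (typically $1$). Your sentence ``each such collapse must be rerouted back through the Case-1 style contradiction, which remains applicable since the shifted pair inherits the size hypotheses'' is exactly where the argument breaks: the Case-1 contradiction needed the hypothesis that $\cA\cup\cB$ has no common element, and you cannot assert that for $\cA'\cup\cB'$. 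Inheriting the size bounds alone is not enough. This is precisely why F\"uredi's original argument does not proceed by shifting.

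Your Case-1 computation (when one of $\cA,\cB$ is already trivial) is clean and correct, and it is in fact the germ of the right extremal count. But the inductive scheme you outline for the nontrivial--nontrivial case is not closed: you yourself flag that the off-$1$ subfamilies $\cA_0,\cB_0$ satisfy only $|\cA_0|\ge 1$, $|\cB_0|\ge 2$, far too weak to re-invoke the theorem on $[2,n]$, and the remark that cross-intersecting $\cA_0$ with $\{B\setminus\{1\}:1\in B\in\cB\}$ ``gains the leverage needed'' is an assertion, not an argument. If you want a self-contained proof, follow the double-counting line you mention at the end, or consult F\"uredi's paper directly; the shifting route, as written, does not go through.
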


Let $t$ be a positive integer. A family $\cF$ of sets is said to be {\it $t$-intersecting}
if $|F\cap F'|\geq t$ for all $F, F'\in\cF$. Erd\H{os}, Ko, and Rado \cite{EKR} proved that for
fixed positive intergers $r,t$, where $r\geq t+1$, there exists
$n_0(r,t)$ such that if $n\geq n_0(r,t)$ then the maximum size of an $t$-intersecting
family of $r$-subsets of $[n]$ is $\binom{n-t}{r-t}$. For $t\geq 15$, Frankl \cite{Frankl-t-intersecting} obtained the smallest possible $n_0(r,t)$ for which the statement holds.
Wilson \cite{Wilson} obtained the smallest possible $n_0(r,t)$ for all $t$.
\begin{theorem} {\rm (\cite{Frankl-t-intersecting} for $t\geq 15$, \cite{Wilson} for all $t$)} \label{t-intersecting}
For all $n\geq (r-t+1)(t+1)$, if $\cF\subseteq \binom{[n]}{r}$ satisfies that
$|F\cap F'|\geq t$ for all $F,F'\in \cF$ (i.e. $\cF$ is $t$-intersecting) then $|\cF|\leq \binom{n-t}{r-t}$.
\end{theorem}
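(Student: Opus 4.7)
The plan is to apply the shifting technique to reduce to the case of a shifted $t$-intersecting family, and then invoke the classification of such families. Recall that for $i<j$, the shift $S_{ij}$ replaces every $F\in\cF$ with $j\in F,\,i\notin F$ by $(F\setminus\{j\})\cup\{i\}$ whenever this new set is not already in $\cF$. A standard check shows that $S_{ij}$ preserves both $|\cF|$ and the $t$-intersecting property, so iterating over all pairs $(i,j)$ until stability lets us assume that $\cF$ is shifted (left-compressed).

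For a shifted $t$-intersecting family, the natural candidate extremal examples are the nested Frankl families
\[
\cA_i \;=\; \left\{F\in \binom{[n]}{r}\,:\, |F\cap [t+2i]|\geq t+i\right\},\qquad i=0,1,2,\ldots,
\]
each of which is easily verified to be shifted and $t$-intersecting (any two members meet $[t+2i]$ in at least $t+i$ elements each, so share at least $t$ elements of $[t+2i]$). A direct calculation gives $|\cA_0|=\binom{n-t}{r-t}$, and comparing $|\cA_0|$ with $|\cA_1|$ yields that $|\cA_0|\geq |\cA_1|$ precisely when $n\geq (r-t+1)(t+1)$; for larger $i$ the threshold for $|\cA_0|\geq |\cA_i|$ is even smaller. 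Thus once one proves that every shifted $t$-intersecting family $\cF$ satisfies $|\cF|\leq \max_i |\cA_i|$, the required bound $|\cF|\leq \binom{n-t}{r-t}$ follows for all $n\geq (r-t+1)(t+1)$.

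The main obstacle is precisely this last step, which is the content of the full Frankl--Wilson theorem (and more generally the Ahlswede--Khachatrian complete intersection theorem). A crude sunflower/kernel argument gives the weaker statement $|\cF|\leq \binom{n-t}{r-t}$ whenever $n$ exceeds some polynomial in $r$ and $t$: if $|\cF|$ is sufficiently large, one locates a sunflower with a core of size at least $t$ and then uses a pigeonhole extension to force every member of $\cF$ to contain that core. However, matching the sharp threshold $n\geq (r-t+1)(t+1)$ requires either Frankl's delicate shifting induction \cite{Frankl-t-intersecting} (valid for $t\geq 15$) or Wilson's linear-algebraic method \cite{Wilson}, which analyses ranks of set-inclusion matrices and eigenvalues of the Johnson scheme to obtain a ratio-type bound. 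Since the statement is cited rather than reproved in the present paper, I would appeal directly to those references for the full sharp version and use Theorem~\ref{t-intersecting} as a black box in what follows.
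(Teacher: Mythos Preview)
The paper does not prove this theorem at all; it is stated with attribution to Frankl and Wilson and used purely as a black box in the proof of Claim~2. Your proposal correctly recognizes this in its final paragraph, and your ultimate conclusion---to appeal directly to \cite{Frankl-t-intersecting} and \cite{Wilson}---is exactly what the paper does. The preceding discussion of shifting, the Frankl families $\cA_i$, and the sunflower heuristic is accurate context but goes beyond what the paper provides; since you explicitly flag that the sharp threshold $n\geq (r-t+1)(t+1)$ is not recovered by the elementary arguments and requires the cited works, there is no gap, and your treatment is consistent with the paper's.
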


Erd\H{o}s \cite{Erdos-matching} showed that there exist $n_1(r,p)$ such that
for all $n\geq n_1(r,p)$ the maximum size of a family of $r$-subsets of $[n]$
not containing a matching of $p+1$ edges is $\binom{n}{r}-\binom{n-p}{r}$.
There has subsequently been a lot of work on
determining the smallest $n_1(r,p)$ for which the statement holds (see \cite{BDE, FLM, FRR, HLS} for instance). The best result among these is due to Frankl \cite{Frankl-matching}.

\begin{theorem}  {\rm \cite{Frankl-matching}} \label{F-matching}
Let $\cF\subseteq \binom{[n]}{r}$ such that $\cF$ contains no matching of size $p+1$,
where $n\geq (2p+1)r-p$. Then $|\cF|\leq \binom{n}{r}-\binom{n-p}{r}$.
\end{theorem}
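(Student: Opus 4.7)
The plan is to combine the shifting technique of Erd\H{o}s--Ko--Rado with a structural analysis of shifted families. The candidate extremal family is all $r$-subsets of $[n]$ that meet $[p]$, which has exactly $\binom{n}{r}-\binom{n-p}{r}$ elements, so the target is to show that after shifting, every member of $\cF$ meets $[p]$.

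First I would reduce to the case when $\cF$ is shifted. For $1\le i<j\le n$ and a family $\cA\subseteq\binom{[n]}{r}$, define the shift $S_{ij}(\cA)$ by replacing each $A\in\cA$ with $j\in A$ and $i\notin A$ by $(A\setminus\{j\})\cup\{i\}$, provided the latter is not already in $\cA$. Iterating over all pairs $(i,j)$ until the family stabilizes gives a shifted family of the same cardinality, and a standard exchange argument shows that a single shift cannot increase the matching number. Hence we may assume $\cF$ itself is shifted and still satisfies $\nu(\cF)\le p$.

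The main step is a structural claim: if $\cF$ is shifted, $\nu(\cF)\le p$, and $n\ge(2p+1)r-p$, then every $F\in\cF$ meets $[p]$. Granting it, $\cF$ is contained in the candidate extremal family and the stated bound follows. To prove the claim, suppose for contradiction that some $F\in\cF$ satisfies $F\cap[p]=\emptyset$. By shift-closedness, the pointwise-minimal such $F$ can be taken to be $\{p+1,\dots,p+r\}$, and the goal becomes producing $p$ further sets $F_1,\dots,F_p\in\cF$ pairwise disjoint and disjoint from $F$, yielding a matching of size $p+1$ and a contradiction. I would argue by induction on $p$: peel off the element $1$, split $\cF$ according to whether or not a set contains $1$, apply the inductive hypothesis to the reduced family on $[2,n]$ with parameter $p-1$, and use shift-closedness to migrate the resulting sets into positions compatible with $F$.

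The main obstacle is obtaining the sharp threshold $n\ge(2p+1)r-p$. A weaker threshold of the form $n\ge C_{r,p}$ for some constant depending on $r$ and $p$ can be extracted from the above outline using a crude greedy construction of the $(p+1)$-matching; Frankl's sharp bound requires a delicate accounting in the inductive step that tracks how many sets of $\cF$ use few elements of $[p]$, combined with cross-intersecting bounds in the spirit of Theorem~\ref{cross-intersecting}. Fortunately, in our application $n$ is taken large relative to the fixed parameters $r$ and $p$, so a weaker threshold would already suffice for what follows; we simply invoke Frankl's sharp statement as given.
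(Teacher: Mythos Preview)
The paper does not give its own proof of this statement: Theorem~\ref{F-matching} is quoted verbatim from Frankl~\cite{Frankl-matching} as an external input, with no argument supplied. So there is nothing to compare your attempt against; the authors treat it as a black box, and in fact only use the cruder consequence stated as Lemma~\ref{matching}.

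Your sketch is a reasonable outline of the standard shifting approach, and indeed this is essentially Frankl's method. However, as you yourself concede in the final paragraph, the argument you describe does not reach the sharp threshold $n\ge(2p+1)r-p$: the inductive ``peeling off $1$'' step you propose gives only a weaker bound, and you end by invoking Frankl's result rather than proving it. So what you have written is not a proof of the theorem but a discussion of why it is plausible, followed by a citation. Since the paper's applications only require Lemma~\ref{matching} (which holds for all $n$) and since $n$ is large relative to $r$ throughout, this is harmless for the paper's purposes---but you should not present the passage as a proof of Theorem~\ref{F-matching}. Either drop the sketch and simply cite~\cite{Frankl-matching}, as the paper does, or be explicit that you are only motivating the result.
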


Note that $\binom{n}{r}-\binom{n-p}{r}<p\binom{n-1}{r-1}$. For our purpose, we will just use the following weakening of Theorem \ref{F-matching} that applies to all $n$.

\begin{lemma} \label{matching} {\rm \cite{Frankl-matching}}
Suppose $\cF\subseteq \binom{[n]}{r}$ satisfies $|\cF|> p\binom{n-1}{r-1}$. Then $\cF$ contains a matching of size $p+1$.
\end{lemma}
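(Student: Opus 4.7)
The plan is to prove the contrapositive: assume $\cF$ has no matching of size $p+1$ and conclude that $|\cF|\leq p\binom{n-1}{r-1}$. The cleanest route is to use Theorem~\ref{F-matching} as a black box. Whenever $n\geq (2p+1)r-p$, that theorem gives $|\cF|\leq \binom{n}{r}-\binom{n-p}{r}$, and I would finish by the telescoping identity
\[
\binom{n}{r}-\binom{n-p}{r} \;=\; \sum_{i=0}^{p-1}\Bigl[\binom{n-i}{r}-\binom{n-i-1}{r}\Bigr] \;=\; \sum_{i=0}^{p-1}\binom{n-i-1}{r-1},
\]
noting that each summand is at most $\binom{n-1}{r-1}$, so the full sum is at most $p\binom{n-1}{r-1}$.

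For the remaining range where Theorem~\ref{F-matching} does not apply directly (but $n\geq (p+1)r$, so a matching of size $p+1$ can still exist), I would pass to the shifted version of $\cF$, since shifting preserves $|\cF|$ and does not increase the maximum matching size. For a shifted family with no matching of size $p+1$, the canonical set $E_{p+1}:=\{pr+1,\dots,(p+1)r\}$ cannot lie in $\cF$: otherwise each of the componentwise-smaller sets $E_i:=\{(i-1)r+1,\dots,ir\}$, $i\leq p$, would also lie in $\cF$ by shifted closure, producing the forbidden matching $\{E_1,\dots,E_{p+1}\}$. The absence of $E_{p+1}$ then forces every $F\in\cF$ to meet $[pr]$, yielding the crude bound $|\cF|\leq pr\binom{n-1}{r-1}$.

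The main obstacle is closing the gap between this crude $pr\binom{n-1}{r-1}$ and the sharper $p\binom{n-1}{r-1}$ demanded by the lemma: the cover argument loses a factor of $r$ because an element of $F\cap [pr]$ need not be unique. The refinement I would attempt is to partition the shifted family by the smallest index $j$ with $F\cap E_j\neq\emptyset$, bound the $p$th class by Erd\H{o}s--Ko--Rado (two disjoint members of it, together with $E_1,\dots,E_{p-1}$, would already yield a matching of size $p+1$, so the class must be intersecting), and handle the earlier classes inductively on $p$ in smaller ground sets; this is essentially where Frankl's original argument for Theorem~\ref{F-matching} does the delicate work. Since the applications of Lemma~\ref{matching} in this paper sit well inside the regime where Theorem~\ref{F-matching} already applies directly, the telescoping route is what I would rely on, with the small-$n$ refinement being bookkeeping that can be deferred to Frankl's argument.
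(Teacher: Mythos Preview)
Your large-$n$ route is exactly what the paper does: immediately before stating the lemma, the paper records that $\binom{n}{r}-\binom{n-p}{r}<p\binom{n-1}{r-1}$ (your telescoping computation) and frames the lemma as the resulting ``weakening of Theorem~\ref{F-matching} that applies to all $n$.'' For the residual range the paper does not argue via shifting; it simply cites \cite{Frankl-matching} and remarks that Frankl in fact proved the shadow inequality $|\cF|\leq s\,|\delta(\cF)|$, where $\delta(\cF)$ is the set of $(r-1)$-subsets contained in members of $\cF$. Strictly speaking, bounding $|\delta(\cF)|\leq\binom{n}{r-1}$ yields only $p\binom{n}{r-1}$ rather than $p\binom{n-1}{r-1}$, so the paper's one-line justification and your shifting sketch both stop short of a fully self-contained proof at the stated sharpness; but, as you correctly observe, every invocation of the lemma in this paper has $n$ large relative to $r$ and $p$, so Theorem~\ref{F-matching} applies directly and the telescoping step is all that is actually needed.
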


In fact, Frankl showed that if $\cF\subseteq \binom{n}{r}$ contains no $(s+1)$-matching
then $|\cF|\leq s|\delta(\cF)|$, where $\delta(\cF)$ denotes the number of distinct $(r-1)$-sets that are contained in edges of $\cF$.

The following lemma is straightforward to verify.

\begin{lemma} \label{binomial-estimate}
Let $n,r,c$ be integers, where $n\geq r\geq 2$ and $c\leq r$. We have
$$\frac{n^r}{r!}-\frac{c+\frac{r-1}{2}}{(r-1)!} n^{r-1}
\leq \binom{n-c}{r}\leq \frac{n^r}{r!}-\frac{c+\frac{r-1}{2}}{(r-1)!} n^{r-1}
+4r^4 n^{r-2}.$$
\end{lemma}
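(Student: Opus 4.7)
The plan is to work directly from the factorial formula
$$\binom{n-c}{r} = \frac{n^r}{r!}\prod_{i=0}^{r-1}\left(1 - \frac{c+i}{n}\right)$$
and to control the product by its first two elementary symmetric polynomials in $x_i := (c+i)/n$, which I denote $e_1, e_2$.

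If $\binom{n-c}{r}>0$, then $n\geq c+r$, so every $x_i\leq (c+r-1)/n\leq 1$, and the Bonferroni inequalities applied to $r$ independent events of probability $x_i$ give
$$1-e_1 \;\leq\; \prod_{i=0}^{r-1}(1-x_i) \;\leq\; 1-e_1+e_2.$$
Direct computation yields $e_1 = \frac{1}{n}(rc+\binom{r}{2})$, so $\frac{n^r}{r!}\cdot e_1 = \frac{c+(r-1)/2}{(r-1)!}\,n^{r-1}$, matching the stated second-order term exactly; multiplying through by $n^r/r!$ already proves the lower bound of the lemma. For the upper bound, using $c+i\leq 2r-1$, I would estimate
$$e_2 = \frac{1}{n^2}\sum_{0\leq i<j\leq r-1}(c+i)(c+j) \;\leq\; \frac{\binom{r}{2}(2r-1)^2}{n^2} \;\leq\; \frac{2r^4}{n^2},$$
so that $\frac{n^r}{r!}\cdot e_2 \leq \frac{2r^4}{r!}n^{r-2}\leq 4r^4 n^{r-2}$ since $r!\geq 2$.

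The complementary case $\binom{n-c}{r}=0$ (that is, $n-c\leq r-1$) is a separate, trivial check. The bound $c\geq n-r+1$ combined with $r\geq 2$ yields $n\leq rc+r(r-1)/2$, which makes the lower-bound polynomial $\frac{n^r}{r!} - \frac{c+(r-1)/2}{(r-1)!}n^{r-1}$ nonpositive; meanwhile $n\leq 2r-1$ in this range, and the crude estimate $|\text{LHS}|\leq 2rn^{r-1}\leq 4r^4 n^{r-2}$ shows the upper-bound polynomial is nonnegative.

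I do not anticipate any serious obstacle. The step most prone to bookkeeping error is bounding $e_2$ cleanly enough to fit inside $4r^4 n^{r-2}$, but the slack here is very large (the true constant is roughly $2/r!$), so this is essentially automatic. The Bonferroni step applies exactly when $\binom{n-c}{r}$ is positive, which removes the potential concern that some $x_i$ exceeds $1$; the degenerate complementary case collapses to a one-line sign check, which is why the authors can fairly describe the lemma as straightforward to verify.
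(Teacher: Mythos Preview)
The paper does not prove this lemma (it only says it is ``straightforward to verify''), so there is no paper argument to compare against. Your approach---writing $\binom{n-c}{r}=\frac{n^r}{r!}\prod_i(1-x_i)$ and bounding the product by $1-e_1$ below and $1-e_1+e_2$ above---is correct and natural; the $e_1$ computation matches the stated second-order term exactly, your estimate $e_2\le 2r^4/n^2$ lands well inside the allowed $4r^4 n^{r-2}$, and the degenerate case $\binom{n-c}{r}=0$ is handled correctly.

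One small point worth flagging: the Bonferroni step treats the $x_i$ as probabilities, which requires $x_i\ge 0$, i.e.\ $c\ge 0$; you only checked $x_i\le 1$. This is not really a gap in your argument so much as a gap in the lemma's stated hypotheses: with only $c\le r$ assumed, the lemma is actually false for negative $c$ (for instance $r=n=3$, $c=-1$ violates the lower bound, and $r=n=2$, $c=-100$ violates the upper bound). The intended hypothesis is evidently $0\le c\le r$, consistent with every use of the lemma in the paper, and under that reading your proof is complete.
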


We can now prove our lower bound result.  Through the remainder of this section, for each 
vertex $x\in V(K(n,r))$, let $D(x)$ denote the $r$-subset of $[n]$ to which $x$ corresponds.

\begin{theorem}  \label{lower-bound}
Let $r\geq 4$ be a fixed positive integer.  
Let $f$ be a  bijection from $V(K(n,r))$ to $\{1,\ldots, \binom{n}{r}\}$. Then for $n$ sufficiently large relative to $r$ we have 
$$dilation(f)\geq \binom{n}{r}-\frac{1}{2}\binom{n-1}{r-1}-2\frac{n^{r-2}}{(r-2)!}+\frac{r+2}{(r-3)!} n^{r-3}-9r^4 n^{r-4}.$$
\end{theorem}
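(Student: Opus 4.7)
I argue by contradiction: suppose $dilation(f)\le D^*-1$ with $D^*=\binom{n}{r}-A$ and $A$ the quantity in the theorem, and set $N=\binom{n}{r}$. For each $k\in[1,A]$ define $\cA_k=f^{-1}([1,k])$ and $\cB_k=f^{-1}([k+D^*,N])$, so $|\cA_k|+|\cB_k|=A+1$. Since no $K(n,r)$-edge can span $D^*$ consecutive labels, the families $\cA_k$ and $\cB_k$ are cross-intersecting as subfamilies of $\binom{[n]}{r}$.

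\textbf{Star element via F\"uredi.} Let $\tau=\binom{n-1}{r-1}-\binom{n-r-1}{r-1}+1$, which by Lemma \ref{binomial-estimate} equals $\frac{r}{(r-2)!}n^{r-2}+O(n^{r-3})$. For every $k$ in the range $[\tau,A-\tau]$, both $|\cA_k|\ge\tau$ and $|\cB_k|>\tau$, so Theorem \ref{cross-intersecting} yields an element $x_k$ common to every set of $\cA_k\cup\cB_k$. To show $x_k$ is constant throughout, I would argue that if $x_k\ne x_{k+1}$ then every set in $\cB_{k+1}\subseteq\cB_k$ contains both $x_k$ and $x_{k+1}$, forcing $|\cB_{k+1}|\le\binom{n-2}{r-2}$; but for $k,k+1\in[\tau,A-\tau]$ one has $|\cB_{k+1}|\ge\tau+1>\binom{n-2}{r-2}$ for $r\ge 4$, a contradiction (the $t$-intersecting view is made explicit by Theorem \ref{t-intersecting}). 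Call the common element $x$ and set $K_0=\tau$, $K_1=A-\tau$. Then every vertex labeled in $[1,K_1]\cup[K_0+D^*,N]$ corresponds to an $r$-set containing $x$, and therefore all $\binom{n-1}{r}$ non-star $r$-sets must lie in the middle window $[K_1+1,K_0+D^*-1]$.

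\textbf{Refinement.} The crude inequality $|\cA_{K_1}|+|\cB_{K_0}|\le\binom{n-1}{r-1}$ (both are disjoint subsets of the star at $x$) rearranges to $A\le\tfrac12\binom{n-1}{r-1}+\tau-\tfrac12$. This reproduces the main term $\tfrac12\binom{n-1}{r-1}$ and has the correct order $n^{r-2}$, but with coefficient $r/(r-2)!$ rather than the target $2/(r-2)!$. To close the gap of $\tfrac{r-2}{(r-2)!}n^{r-2}$, I would analyze the boundary zones $k\in[1,K_0-1]$ and $k\in[K_1+1,A]$. On these zones $\cA_k,\cB_k$ remain cross-intersecting but F\"uredi fails on one side, so the trivial star structure is not automatic. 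There I would apply Lemma \ref{matching} on the larger family to extract a matching of appropriate size, use Theorem \ref{t-intersecting} to force the smaller family to be $t$-intersecting for some $t\ge 2$, and carefully account for star-at-$x$ members displaced from the ends into the middle. Invoking Lemma \ref{binomial-estimate} to convert binomial quantities into polynomial estimates should then reduce the coefficient on $n^{r-2}$ to $2$, produce the $+\tfrac{r+2}{(r-3)!}n^{r-3}$ correction, and leave $O(n^{r-4})$ error.

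\textbf{Expected main obstacle.} The refinement step is the hard one: the F\"uredi argument alone gives multiplier $r$, while the target multiplier is $2$. Combining the matching bound on one family with the $t$-intersecting bound on the other, tracking exactly which elements of the star at $x$ are pushed into the middle window, and doing so with only $O(n^{r-4})$ slack is where almost all the technical work lies. In particular, extracting the precise $(r+2)/(r-3)!$ coefficient on the $n^{r-3}$ term, rather than merely an $O(n^{r-3})$ bound, will demand careful bookkeeping of the interplay between the matching produced in the boundary zones and the star structure established in the interior.
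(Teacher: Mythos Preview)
Your framework is sound and matches the paper's opening: use F\"uredi's cross-intersecting theorem on the two end-families to pin down a common star element $x$, then observe that the crude count $|\cA_{K_1}|+|\cB_{K_0}|\le\binom{n-1}{r-1}$ only yields the weak constant $r/(r-2)!$ in front of $n^{r-2}$. You correctly identify that the real work is the refinement, and you name the right tools (Lemma~\ref{matching}, Theorem~\ref{t-intersecting}). But the proposal stops short of the decisive idea, and as written it is a plan rather than a proof.

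The missing mechanism is this. The paper does \emph{not} try to sharpen the bound on $|\cA_k|+|\cB_k|$ directly. Instead, for a small packet $\mathcal{X}$ of non-star sets sitting just inside the right end, it shows there exist \emph{two specific elements} $u,v\in[n]\setminus\{x\}$ with the property that every member of $\mathcal{X}$ contains both $u$ and $v$. (The matching lemma is used to bound the matching number of the left prefix by $3$, whence a two-element high-degree cover $\{u,v\}$ can be extracted; this is where ``two'' comes from, and ultimately why the final constant is $2$ rather than $r$.) One then defines $a(u,v)$ to be the number of left-end star sets meeting $\{u,v\}$ and proves, via the $t$-intersecting theorem, that once the left prefix passes $a(u,v)+O(n^{r-3})$ it must contain a set disjoint from some member of $\mathcal{X}$, producing a long edge. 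Doing the symmetric construction on the right yields $u',v'$ and $b(u',v')$. The key closing step is the averaging inequality
\[
a(u,v)+b(u',v')\ \le\ \#\{F\ni x:\ F\cap\{u,v,u',v'\}\neq\emptyset\}\ =\ \binom{n-1}{r-1}-\binom{n-5}{r-1},
\]
which after halving gives exactly $2\,n^{r-2}/(r-2)! - (r+2)n^{r-3}/(r-3)! + O(n^{r-4})$. Your sketch (``matching on the large side, $t$-intersecting on the small side, track displaced star members'') does not isolate the two-element cover $\{u,v\}$, the functions $a(\cdot,\cdot),b(\cdot,\cdot)$, or the four-element averaging that converts $r$ into $2$ and simultaneously produces the precise $(r+2)/(r-3)!$ coefficient. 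Without that device the refinement step remains open.
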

\begin{proof}
Let $$N=\ce{\frac{1}{2}\binom{n-1}{r-1}}-r\binom{n-2}{r-2}.$$
and
$$\cA=\left\{D(x):1\leq f(x) \le N\right \} \mbox { and }
\cB=\left\{D(x):\binom{n}{r}-N\leq f(x)\leq
\binom{n}{r}\right\}.$$
Assume $n$ to be sufficiently large relative to $r$ so that $\cA$ and $\cB$ are disjoint.
If $dilation(f)\geq \binom{n}{r}-\frac{1}{2}\binom{n-1}{r-1} + K$ for some constant $K$, then we are already done. So we assume that $dilation(f)<\binom{n}{r}-\frac{1}{2}\binom{n-1}{r-1} + K$ 
for any constant $K$.

\medskip

{\bf Claim 1.} There exists an element $x$ that lies in all members of $\cA \cup \cB$; that is $\cA \cup \cB$ is trivial. 

\medskip

{\it Proof of Claim 1.} Let $\cA'=\{D(x): f(x)\leq r\binom{n-2}{r-2}\}$ and $\cB'=\{D(x):
f(x)\geq \binom{n}{r}-r\binom{n-2}{r-2}\}$. Then $\cA'\subseteq \cA$ and $\cB'\subseteq
\cB$. If $\cA'$ and $\cB$ are not cross-intersecting then there exist $x,y$ in $K(n,r)$
with $f(x)\leq r\binom{n-2}{r-2}$ and $f(y)\geq \binom{n}{r}-N$ such that
$xy\in E(K(n,r))$, which yields $dilation(f)\geq |f(x)-f(y)|\geq \binom{n}{r}-N-r\binom{n-2}{r-2}\geq \binom{n}{r}-\frac{1}{2}{\binom{n-1}{r-1}} - 1$, contradicting
our assumption. Hence $\cA',\cB$ are cross-intersecting. Similarly, $\cA, \cB'$ 
are cross-intersecing. For sufficiently large $n$, we have
$|\cA'|=r\binom{n-2}{r-2}>\binom{n-1}{r-1}-\binom{n-r-1}{r-1}+1$
and $|\cB|\geq\frac{1}{2}\binom{n-1}{r-1}-r\binom{n-2}{r-2} - 1 >\binom{n-1}{r-1}-\binom{n-r-1}{r-1}+1$.
By Theorem \ref{cross-intersecting}, there exists an element $x$ that lies in 
all members of $\cA'$ and $\cB$. By a similar argument, there exists an element
$y$ that lies in all members of $\cA$ and $\cB'$. Suppose $x\neq y$. Then
$x,y$ both lie in all members of $\cA'$, which is impossible since there are
only $\binom{n-2}{r-2}$ many $r$-subsets of $[n]$ containing both $x$ and $y$, while
$|\cA'|=r\binom{n-2}{r-2}\geq 4\binom{n-2}{r-2}$. Hence $x=y$. So the element $x$ lies in all $r$-subsets of 
$\cA \cup \cB$.  \qed

\medskip

Without loss of generality, we may assume that element $1$ lies in all members of
$\cA \cup \cB$. Let $\cS_1=\{D\in \binom{[n]}{r}, 1\in D\}$ and $\overline{\cS_1} = \{D\in \binom{[n]}{r}: 1\notin D\}$. Then
$\cA\cup \cB\subseteq \cS_1$, $|\cS_1|=\binom{n-1}{r-1}$, and $|\overline{\cS_1} |=\binom{n-1}{r}.$

For every pair of elements $x,y\in [n]$, let $a(x,y)$ denote the number of sets in
$\cA$ that contain either $x$ or $y$ or both, and let $b(x,y)$ denote the number
of sets in $\cB$ that contain either $x$ or $y$ or both.  Also let $\cA_1(x,y) =\{D(z): 1\leq f(z)\leq  \max\{\binom{n-2}{r-2}+3r^2\binom{n-3}{r-3}, a(x,y)\} + \binom{n-3}{r-3}\}$   

\medskip

{\bf Claim 2.}
Let $\mathcal{X} \subseteq \overline{\cS_1}$ satisfy 
$|\mathcal{X}|>\binom{n-3}{r-3}+r^2\binom{n-4}{r-4}$. Then there exist elements $u,v\in [n]$ such 
that  $\cA_1(u,v)$ and $\mathcal{X}$ are not cross-intersecting.

\medskip

{\it Proof of Claim 2.} Let $\cA_0=\{D(x): 1\leq f(x)\leq \binom{n-2}{r-2}+3r^2\binom{n-3}{r-3}\}$ and $\cA'_0=\{D\setminus \{1\}: D\in \cA_0\}$, so $|\cA'_0| = |\cA_0|$.
Since  $\cA_0\subseteq \cA_{1}(u,v)$ for any $u,v\in [n]$,  
we may assume that $\cA_0$ and $\mathcal{X}$ are cross-intersecting, since otherwise the claim holds for any choice of $u,v$.
First suppose $\cA'_0$ contains a matching $M$ of size $4$. Let $C\in \mathcal{X} $. Since
$1\notin C$, $C$ must intersect each of the edges of $M$. Since $M$ is a matching,
$C$ intersects each edge of $M$ in a different vertex. But the number of such $C$
is at most $(r-1)^4 \binom{n-4}{r-4}<|\mathcal{X}|$ for large $n$, a contradiction.  Thus any maximum matching $M$ of $\cA'_0$ 
satisfies $|M|\le 3$, so that $|V(M)|\le 3(r-1) < 3r$.  
Note that $V(M)$ forms a vertex cover of $\cA'_0$; or else
we can find a larger matching in $\cA'_0$. Hence some vertex $u$ in $V(M)$ lies in
at least $|\cA'_0|/3r>3r^{2}\binom{n-3}{r-3}/3r>r\binom{n-3}{r-3}$ edges of $\cA'_0$.  Let $\cA'_{0}(u)$ be the set 
of edges in $\cA'_0$ that contain $u$, and let $\cA''_{0}(u) = \{E-u: E\in \cA'_{0}(u)\}$.  Then $\cA''_{0}(u)\subset \binom{[n-2]}{r-2}$ and and we've seen that
$|\cA''_{0}(u)| > r\binom{n-3}{r-3}$.  Applying Lemma \ref{matching} we see that $\cA''_{0}(u)$ contains a matching of size $r+1$, call it $D_{1}, D_{2}, \cdots, D_{r+1}$.  
Let $E_{i} = D_{i}\cup \{u\} \in \cA'_0$, $1\le i\le r+1$.
Note that $u$ lies in at most $\binom{n-2}{r-2}$ edges of
$\cA'_0$.  So there are at least $|\cA'_0| - \binom{n-2}{r-2} = 3r^2\binom{n-3}{r-3}$ edges 
of $\cA'_0$ not covered by $u$. These edges are covered by $V(M)\setminus \{u\}$. So some vertex $v\in V(M)\setminus \{u\}$ must lie in at least $3r^2\binom{n-3}{r-3}/(3r-1)\geq r\binom{n-3}{r-3}$ of these edges. Hence by Lemma \ref{matching} as above, there are $(r+1)$ edges $F_1,\ldots, F_{r+1}$ of $\cA'_0$ containing $v$ such that $F_1\setminus \{v\},\ldots,
F_{r+1}\setminus \{v\}$ form a matching in $\binom{[n-2]}{r-2}$. Again let $C\in \mathcal{X} $. Since $1\notin C$ by assumption, $C$ must intersect all members of $\cA'_0$; in particular, $C$ intersects 
all the edges $E_1,\ldots, E_{r+1}$. Since $|C|\leq r$, it follows that $u\in C$. Similarly,
we have $v\in C$. So in order for $\mathcal{X}$ to cross-intersect $\cA_0$, all edges of $\mathcal{X} $ contain both $u$ and $v$.

For this fixed choice of $u,v\in M$ satisfying $u,v\in C$ for all edges $C\in \mathcal{X}$,  
we show that
$\cA_1(u,v)$ and $\mathcal{X}$  are not cross-intersecting.
Suppose not. Let $\cA'_1(u,v) =\{D\setminus \{1\}: D\in \cA_1(u,v)\}$, so $\cA'_1(u,v)\subset \binom{[n-1]}{r-1}$.
Since $|\cA'_1(u,v)| = |\cA_1(u,v)| > a(u,v)+\binom{n-3}{r-3}$, there are more than $\binom{n-3}{r-3}$ edges of $\cA'_1(u,v)$ that contain neither $u$ nor $v$. Applying Theorem \ref{t-intersecting} 
with $n-1$ and $r-1$ playing the roles of $n$ and $r$ respectively, and with $t = 2$, we see that among these edges there are two edges  
$E,E'$ such that $|E\cap E'|\leq 1$. First suppose that $E\cap E'=\emptyset$. For any $C\in \mathcal{X}$, $C$ must contain both $u$ and $v$
and intersect each of $E$ and $E'$. This yields $|\mathcal{X}|\leq (r-1)^2\binom{n-4}{r-4}$,
contradicting our assumption about $|\mathcal{X} |$. Hence we may assume that $E\cap E'=\{w\}$ for some $w\notin \{1,u,v\}$. As usual, all members of $\mathcal{X}$ must contain
$u$ and $v$ and intersect $E$ and $E'$. Among them there are at most $\binom{n-3}{r-3}$
that contain $w$ and at most $(r-2)^2 \binom{n-4}{r-4}$ that do not contain $w$. Hence $|\mathcal{X}|\leq \binom{n-3}{r-3}+(r-2)^2\binom{n-4}{r-4}$, contradicting our
assumption about $|\mathcal{X}|$. \qed

\medskip

Symmetric with $\cA_1(x,y)$ defined above, let $\cB_1(x,y) =\{D(z): \binom{n}{r} - f(z)\leq  \max\{\binom{n-2}{r-2}+3r^2\binom{n-3}{r-3}, b(x,y)\} + \binom{n-3}{r-3}\}$.  
By a similar argument which we omit, we have the following.

\medskip

{\bf Claim 3.}
Let $\mathcal{X} \subseteq \overline{\cS_1}$ satisfy $|\mathcal{X}|>\binom{n-3}{r-3}+r^2\binom{n-4}{r-4}$. Then there exist elements $u',v'\in [n]$ such that $\cB_1(u',v')$   
and $\mathcal{X}$ are not cross-intersecting. 

\medskip

Now, let $\cC$ be the subcollection of $\binom{[n]}{r}$ of minimum size such that $f(\cC)$ is an interval immediately following 
$f(\cA)$, and $|\cC \cap \overline{\cS_1}| =  1 + \binom{n-3}{r-3}+r^2\binom{n-4}{r-4}$. 
Similarly, let $\cD$ be the subcollection of $\binom{[n]}{r}$ of minimum size such that $f(\cD)$ is an interval immediately preceding 
$f(\cB)$, and $|\cD \cap \overline{\cS_1}| =  1 + \binom{n-3}{r-3}+r^2\binom{n-4}{r-4}$.
When $n$ is sufficiently large, $\cC$ and $\cD$  are well defined and are disjoint.
By definition, 
\begin{equation} \label{ABCD}
|\cA\cup \cB\cup \cC \cup \cD|\leq \binom{n-1}{r-1}+2\binom{n-3}{r-3}
+2r^2\binom{n-4}{r-4} + 2.
\end{equation}

By Claim 2 applied to $\cD \cap \overline{\cS_1}$ in place of $\mathcal{X}$, there exist elements $u,v\neq 1$ such that 
some member $D\in \cD$ is disjoint from an $r$-set $E$ satisfying $f(E)\le \max\{\binom{n-2}{r-2}+3r^2\binom{n-3}{r-3}, a(u,v)\}+\binom{n-3}{r-3}$.
Note also that $f(D)\geq \binom{n}{r}-|\cB|-|\cD|$. 

Letting 
$$\ell=\binom{n-2}{r-2}+3r^2\binom{n-3}{r-3}.$$
we then have
\begin{equation} \label{lower1}
dilation(f)\geq |f(D) - f(E)|\geq \binom{n}{r}-|\cB|-|\cD|-\max\{\ell, a(u,v)\}-\binom{n-3}{r-3}.
\end{equation}
By a similar argument, for some elements $u',v'\neq 1$, we have
\begin{equation} \label{lower2}
dilation(f)\geq \binom{n}{r}-|\cA|-|\cC|-\max\{\ell, b(u',v')\}-\binom{n-3}{r-3}.
\end{equation}
Let 
$$\lambda_1=\frac{1}{2}(|\cA|+|\cB|+|\cC|+|\cD|), \quad  \mbox{ and }  \quad 
\lambda_2=\frac{1}{2}(\max\{\ell, a(u,v)\}+ \max\{\ell, b(u',v')\}).$$
By averaging \eqref{lower1} and \eqref{lower2}, we get

\begin{equation} \label{lower-average}
dilation(f)\geq \binom{n}{r}-\lambda_1-\lambda_2-\binom{n-3}{r-3}.
\end{equation}

By \eqref{ABCD},
\begin{equation} \label{bound1}
\lambda_1\leq \frac{1}{2} \binom{n-1}{r-1}+\binom{n-3}{r-3}
+r^2\binom{n-4}{r-4} + 1.
\end{equation}

Letting $m(r,n) = \frac{1}{2}[\binom{n-2}{r-2}+\binom{n-3}{r-2}+\binom{n-4}{r-2}+\binom{n-5}{r-2}]$, we show that $\lambda_{2}\le m(r,n)$.  Observe first 
that $\lambda_{2} =  \max \{ \frac{1}{2}(\ell+a(u,v),  \frac{1}{2}(\ell+b(u',v'),  \frac{1}{2}(a(u,v)+b(u',v')), \ell \}$, and we can bound 
the expressions in the braces as follows.  
Certainly $a(u,v)$ is no more than the total number of $r$-subsets of
$[n]$ that contain $1$ and at least one of $u,v$. Thus
$a(u,v)\le \binom{n-1}{r-1}-\binom{n-3}{r-1} = \binom{n-2}{r-2}+\binom{n-3}{r-2}$, 
and similarly for $b(u',v')$.  Thus 
 $\frac{1}{2}(\ell+a(u,v))\le \frac{1}{2}[2\binom{n-2}{r-2} + \binom{n-3}{r-2} + 3r^{2}\binom{n-3}{r-3}] < m(r,n)$ for large $n$.  The same bound 
 holds for $\frac{1}{2}(\ell+b(u',v'))$ symmetrically.  Now
$a(u,v)+b(u',v')$ is no more than the total number of $r$-subsets of
$[n]$ that contain $1$ and at least one of $u,v,u',v'$. Thus
$a(u,v) + b(u',v')\le \binom{n-1}{r-1}-\binom{n-5}{r-1}=\binom{n-2}{r-2}+\binom{n-3}{r-2}
+\binom{n-4}{r-2}+\binom{n-5}{r-2} = 2m(r,n)$, so $m(r,n)\geq \frac{1}{2}[a(u,v)+b(u',v')]$.  Finally,  
 since $\ell=\binom{n-2}{r-2}+3r^2\binom{n-3}{r-3}$, for sufficiently large $n$ we have
$m(r,n) > \frac{1}{2}[\binom{n-2}{r-2}+\binom{n-3}{r-2}+\ell]\geq
\ell$.  So we've shown that $\lambda_{2}\le m(r,n)$.  

The required lower bound on $dilation(f)$ is now obtained as follows.  Applying Lemma \ref{binomial-estimate} to the four binomial terms in $m(r,n)$, we get 
\begin{equation} \label{bound2}
\lambda_2\leq m(r,n)\leq 2\frac{n^{r-2}}{(r-2)!}-\frac{r+4}{(r-3)!}n^{r-3} + 8r^4 n^{r-4}.
\end{equation}
Hence by \eqref{lower-average}, \eqref{bound1}, and \eqref{bound2}, we have
\begin{eqnarray*}
dilation(f) &\geq& \binom{n}{r}-\lambda_1-\lambda_2-\binom{n-3}{r-3}.\\
&\geq& \binom{n}{r}-\frac{1}{2}\binom{n-1}{r-1}-2\binom{n-3}{r-3}-2\frac{n^{r-2}}{(r-2)!}+\frac{r+4}{(r-3)!} n^{r-3}-9r^4 n^{r-4}\\
&\geq&\binom{n}{r}-\frac{1}{2}\binom{n-1}{r-1}-2\frac{n^{r-2}}{(r-2)!}+\frac{r+2}{(r-3)!} n^{r-3}-9r^4 n^{r-4}.\\
\end{eqnarray*}

\end{proof}

%%%%%%%%%%%%%%%%%%%%%%%%%%%%%%%%%%%%%%%%%%
\section{The upper bound}  

In this section we give a construction which yields our upper bound for $B(K(n,r))$.  We begin with some notation.  For any 
sequence $\{ i_{j} \}$ of $t$ integers in increasing order, $1\le i_{1} < i_{2} < \cdots < i_{t}\le n$, let $S_{i_{1}i_{2}\cdots i_{t}}$ be the collection 
of all $r$-sets in $\binom{[n]}{r}$ whose smallest $t$ elements are $i_{1}, i_{2}, \cdots , i_{t}$, so that $|S_{i_{1}i_{2}\cdots i_{t}}| = \binom{n-i_{t}}{r-t}$.  Occasionally we insert commas 
between successive $i_{j}$ for clarity; e.g. $S_{1,8,10}$ is the collection of $r$-sets in $\binom{[n]}{r}$ whose smallest three elements are $1,8$, and $10$. 
For any 
$S\subset \binom{[n]}{r}$ , let $\mathcal{I}(S) = \{ w\in [n]: w\in \bigcap_{v\in S}v \}$, the intersection of all $r$-sets in $S$.    

Let $f:V(K(n,r))\rightarrow [1,\binom{n}{r}]$ be a one to one map.  As a convenience, in this section we identify any subset $A\subseteq V(K(n,r))$ with $f(A)\subseteq [1,\binom{n}{r}]$,  and any 
subset $B\subseteq [1,\binom{n}{r}]$ with $f^{-1}(B)$.  For subsets $F,G\subset  [1,\binom{n}{r}]$,
we say that $G$ is a {\it right blocker} of $F$ if
 \newline(a) $G = [u,\binom{n}{r}]$ for some $\frac{1}{2}\binom{n}{r} < u < \binom{n}{r}$ (so $G$ is a terminal interval).
 \newline(b) $F$ and $G$ are cross intersecting; that is, $v\cap w\ne \emptyset$ for any $v\in F$, $w\in G$, viewing $v,w$ as $r$-subsets under the identification above.  
 In particular, $vw\notin E(K(n,r))$ for all $v\in F$ and $w\in G$.

 For any subset $F\subset V(K(n,r))$ and map $f$ as above, let $\partial(F) =$ max$\{ |f(v) - f(w)|: v\in F$ or $w\in F, vw\in E(K(n,r)) \}$.

 \begin{lemma} \label{blockedintervals} Let $f:V(K(n,r))\rightarrow [1,\binom{n}{r}]$ be a one to one map.
 Suppose $G$ is a right blocker for $F = [x,y]\subset [1,\binom{n}{r}]$. Then
 $\partial(F)\le$ max$\{y-1,\binom{n}{r} - (|G| + x)\}$.

 \end{lemma}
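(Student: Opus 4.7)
The plan is to reduce the statement to a case analysis on a single edge $vw \in E(K(n,r))$ having at least one endpoint whose $f$-label lies in $F = [x,y]$. Because $\partial(F)$ is the maximum of $|f(v)-f(w)|$ over all such edges, and the roles of $v$ and $w$ are symmetric, it suffices to bound $|f(v)-f(w)|$ under the assumption that $f(v) \in [x,y]$, and then the same bound automatically holds for $\partial(F)$.

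The decisive structural input is property (b) of a right blocker: for every $v' \in F$ and $w' \in G$ the $r$-sets $v',w'$ intersect, so $v'w' \notin E(K(n,r))$. Since $vw$ is an edge of $K(n,r)$ and $v \in F$, this forces $w \notin G$, that is $f(w) \le u - 1 = \binom{n}{r} - |G|$. Together with the trivial bound $1 \le f(w)$ and the hypothesis $x \le f(v) \le y$, this pins $f(v)$ and $f(w)$ down to two explicit sub-intervals of $[1,\binom{n}{r}]$ from which the estimate must follow.

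I would finish by splitting on the position of $f(w)$ relative to $y$. If $f(w) \le y$, then both $f(v)$ and $f(w)$ lie in $[1,y]$, giving $|f(v)-f(w)| \le y-1$. If $f(w) > y$, then $f(v) \ge x$ and $f(w) \le \binom{n}{r}-|G|$ combine to yield $|f(v)-f(w)| \le \binom{n}{r}-(|G|+x)$. Taking the maximum over the two cases produces the desired bound. There is no real obstacle here: the lemma is essentially a bookkeeping statement once one recognizes that a right blocker forbids every edge from $F$ to the terminal interval $G$, leaving only the straightforward interval arithmetic on $[1,\binom{n}{r}]$.
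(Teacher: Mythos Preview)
Your proposal is correct and follows essentially the same approach as the paper: fix an edge $vw$ with $v\in F$, use the right-blocker property to force $w\notin G$ (hence $f(w)\le \binom{n}{r}-|G|$), and do a two-case split to bound $|f(v)-f(w)|$. The only cosmetic difference is that the paper splits on $f(w)<f(v)$ versus $f(w)>f(v)$ rather than on $f(w)\le y$ versus $f(w)>y$, which makes no substantive difference.
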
 
 
 \begin{proof}  Consider an edge $vw$, $v\in F$.  If $f(w) < f(v)$, then since $f(v)\le y$ and $f(w)\geq 1$ we get    
 $|f(v) - f(w)|\le y-1$ in this case.  If $f(w) > f(v)$, then since $w\notin G$ we have 
  $f(w) \le \binom{n}{r} - |G|$ while $f(v) \geq x.$  It follows that  $|f(v) - f(w)|\le \binom{n}{r} - (|G| + x)$, as required.     \end{proof}

 In our next theorem we obtain an upper bound for $B(K(n,r))$ by construction.

\begin{theorem} \label{betterupperbound} Let $r\geq 3$ be a fixed positive integer. Then for large $n$ we have  
 
$$B(K(n,r))\le \binom{n}{r} - \frac{1}{2}\binom{n-1}{r-1} - 2\frac{n^{r-2}}{(r-2)!} + (r + 2)\frac{n^{r-3}}{(r-3)!} + O(n^{r-4}). $$   

 \end{theorem}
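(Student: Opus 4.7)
The plan is to exhibit an explicit one-to-one map $f \colon V(K(n,r)) \to [1, \binom{n}{r}]$ and to bound its dilation by combining Lemma~\ref{blockedintervals} with careful size counts. The trivial construction, which splits $S_1 = \{v : 1 \in v\}$ into two halves placed at opposite ends, already yields dilation at most $\binom{n}{r} - \frac{1}{2}\binom{n-1}{r-1}$. To gain the additional $2\frac{n^{r-2}}{(r-2)!} - (r+2)\frac{n^{r-3}}{(r-3)!} + O(n^{r-4})$ improvement, I plan to augment each end by an extra layer of roughly $\binom{n-2}{r-2}$ further $r$-sets, chosen so that the two enlarged ends remain cross-intersecting (so no edge of $K(n,r)$ connects them). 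Concretely I envision placing at the left end, in order: (i) the sets in $S_{1,2}$, of size $\binom{n-2}{r-2}$; (ii) a selected half of $S_1 \setminus S_{1,2}$; and (iii) a small extra family of $r$-sets outside $S_1$ that still intersects every set placed near the right end. The right end is built by a symmetric arrangement, and the middle interval is filled in an arbitrary order.

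Given this $f$, I plan to bound the dilation by partitioning $[1,\binom{n}{r}]$ into a few intervals $F_i = [x_i,y_i]$ and, for each $F_i$ lying near the left end, applying Lemma~\ref{blockedintervals} with a right blocker $G_i$ equal to the largest terminal interval cross-intersecting with $F_i$. By design $G_i$ consists of (most of) the right end together with those sets in $S_1$ that happen to be disjoint from the particular $F_i$ at hand, so $|G_i|$ is large. The lemma then gives $\partial(F_i)\leq\max\{y_i - 1,\ \binom{n}{r} - (|G_i| + x_i)\}$; both quantities will be at most the claimed dilation bound once the sizes of layers (i)--(iii) are tuned to the right constants. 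A symmetric argument with left blockers handles intervals near the right end, and edges entirely inside the middle portion are automatically short because the middle has width at most the claimed dilation bound.

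The main obstacle will be to ensure that the cross-intersection conditions hold simultaneously across all four pairs of sub-families (left-$S_1$ versus right-$S_1$, left-$S_1$ versus right-extra, left-extra versus right-$S_1$, and left-extra versus right-extra). This will force the extra layers to share a common ``secondary'' element (say the element $2$), which is why their sizes come out to $\binom{n-2}{r-2}+O(n^{r-3})$ rather than something larger. Once the construction and the four cross-intersection conditions are verified, the precise $O(n^{r-4})$ error term in the statement will be extracted by applying Lemma~\ref{binomial-estimate} to each of the binomial expressions appearing in the cumulative sizes of layers (i)--(iii) and expanding them as polynomials in $n$, and then collecting terms to match the coefficients $-2/(r-2)!$ and $(r+2)/(r-3)!$ in the statement.
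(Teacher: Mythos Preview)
Your high-level plan---split $S_1$ at the two ends, add an extra layer of size about $\binom{n-2}{r-2}$ on each side, and bound $\partial(F)$ block by block via Lemma~\ref{blockedintervals}---is exactly the paper's strategy. However, the specific mechanism you propose (make the two enlarged ends globally cross-intersecting by having both extra layers share a single secondary element such as $2$) cannot deliver the claimed bound, and this is the genuine gap.

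The obstruction is the block $F$ consisting of the ``generic'' half of $S_1$ placed just after $S_{1,2}$, i.e.\ the sets in $S_1'\setminus S_{1,2}$ (your layer (ii)). These sets contain $1$ but need not contain $2$ or any other fixed element, so no family disjoint from $S_1$ can cross-intersect them; the only admissible right blocker $G$ for this $F$ is contained in $S_1''$, hence $|G|\le \frac{1}{2}\binom{n-1}{r-1}$. Since this block begins at $x\approx \binom{n-2}{r-2}$, you get $|G|+x\le \frac{1}{2}\binom{n-1}{r-1}+\binom{n-2}{r-2}+O(n^{r-3})$, which falls short of the required $\frac{1}{2}\binom{n-1}{r-1}+2\frac{n^{r-2}}{(r-2)!}+O(n^{r-3})$ by one full term of order $n^{r-2}$. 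For the same reason the ``four pairs simultaneously cross-intersecting'' requirement is infeasible: left-extra (not containing $1$) cannot cross-intersect the generic part of right-$S_1$, and symmetrically.

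The paper's fix is not a cleverer choice of secondary element but a finer, telescoping block structure. Before the generic block $..S_1'$ it places \emph{two} blocks of the form $S_{1b}$ (namely $S_{12}$ and $S_{15}$) plus two $S_{1bc}$ blocks, pushing the start $x$ of $..S_1'$ up to roughly $2\binom{n}{r-2}$; then $G=S_1''$ alone suffices for that block. Each earlier block $S_{1b}$ gets its own larger right blocker by matching it with a piece $S_{2b'}$ on the right sharing the second index, so that $|G|$ shrinks by exactly as much as $x$ grows. The resulting layout has $29$ carefully ordered blocks, and the verification reduces to checking for each left block that $|G|+x$ equals $1+|S_1''\cup S_{ab}\cup S_{cd}\cup S_{rst}\cup S_{r's't'}|$ with $b+d=7$, after which Lemma~\ref{binomial-estimate} gives the stated asymptotics. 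Your proposal would need to be reworked along these telescoping lines to close the gap.
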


 \begin{proof} Let $L(n,r) =  \binom{n}{r} - \frac{1}{2}\binom{n-1}{r-1} - 2\frac{n^{r-2}}{(r-2)!} + (r + 2)\frac{n^{r-3}}{(r-3)!} + O(n^{r-4})$, the right side 
 of the inequality in the theorem.  We will define a map $f:V(K(n,r))\rightarrow [1, \binom{n}{r}]$ and 
 a partition of  $[1, \binom{n}{r}]$ into intervals such that $\partial(F)\le L(n,r)$ for each 
 interval F in the partition.  Since $dilation(f)$ is the maximum of $\partial(F)$ over all these $F$, we obtain a map $f$ 
 satisfying $dilation(f)\le L(n,r)$, as required.

 First we begin with a partition  $S_{1}  = S_{1}' \cup S_{1}''$ of $S_{1}$ in which $|S_{1}' | = \lfloor \frac{1}{2}|S_{1}| \rfloor $ and 
 $|S_{1}'' | = \lceil \frac{1}{2}|S_{1}| \rceil $, defined as follows.    
We include $S_{12}\cup S_{15}\cup S_{1,8,9}\cup S_{1,8,10}$ in $S_{1}'$, and also include $S_{13}\cup S_{14}\cup S_{167}\cup S_{168}$ in $S_{1}''$, and then 
 fill out the rest of $S_{1}'$ and $S_{1}''$ arbitrarily with points from $S_{1}$, up to size $\lfloor \frac{1}{2}|S_{1}| \rfloor$ for $S_{1}'$, and up to size 
 $\lceil \frac{1}{2}|S_{1}| \rceil$ for $S_{1}''$.  Formally, we let
 $S_{1}' = S_{12}\cup S_{15}\cup S_{189}\cup S_{1,8,10}\cup X'$ and  $S_{1}'' = S_{13}\cup S_{14}\cup S_{167}\cup S_{168}\cup X''$, 
 where $X'$ is any subset of $S_{1} - \{ S_{12}\cup S_{15}\cup S_{189}\cup S_{1,8,10}\cup S_{13}\cup S_{14}\cup S_{167}\cup S_{168} \}$ of size 
 $\lfloor \frac{1}{2}|S_{1}| \rfloor - | S_{12}\cup S_{15}\cup S_{189}\cup S_{1,8,10} |$, and where $X'' = S_{1} -  S_{1}' - \{S_{13}\cup S_{14}\cup S_{167}\cup S_{168}\}  $.

Now define the map $f:V(K(n,r)\rightarrow [1, \binom{n}{r}]$ by Table \ref{thelayout}, with the following meaning.  There are $29$ cells  
in this table, counting $R = V(K(n,r))-(S_{1}\cup S_{2}\cup S_{3})$ as a single cell using wraparound.  We call these cells  {\it blocks} of $f$.  Each 
block labeled $S_{ij}$, $S_{t}$, $S_{ijk}$, or $R$ in this Table indicates that $f(S_{ij})$, $f(S_{t})$, $f(S_{ijk})$, or $R$ (respectively) is an interval of length 
$|S_{ij}|$, $|S_{t}|$, $|S_{ijk}|$, or $|R|$ (respectively) in $[1, \binom{n}{r}]$.      
The order in which points of $S_{ij}$ (or of $S_{t}$, $S_{ijk}$, or $R$) are mapped to this 
interval is arbitrary.  So we view the blocks of $f$ interchangably either as subsets of $V(K(n,r))$ or as intervals in $[1, \binom{n}{r}]$ under the 
identification explained at the beginning of this section.  
The relative order in which these blocks are mapped to $[1, \binom{n}{r}]$ is indicated by the left to right order 
of their appearance in the Table \ref{thelayout}, where the second row of the Table is understood to follow the first row in left to right order.

We define the blocks $..S_{ij}$, $S_{ij}..$, $S_{1}''..$, and $..S_{1}'$ in Table \ref{thelayout} not explained above.   
A block $..S_{ij}$ (resp. $S_{ij}..$) indicates that for some subset $S\subset S_{ij}$ the image $f(S)$ occupies some set of consecutive blocks 
immediately preceding (resp. following) the block $..S_{ij}$ (resp. $S_{ij}..$), and that $f(S_{ij} - S)=..S_{ij}$ (resp. $S_{ij}..$).   
Furthermore $f(S_{ij} - S)$ 
is the interval of length 
$|S_{ij} - S|$ in the position of block $..S_{ij}$ (resp. $S_{ij}..$).  So altogether $f(S)$ together with $..S_{ij}$ (resp. $S_{ij}..$) is a consecutive set of blocks
of $f$ which constitute the image $f(S_{ij})$.      
As an example, consider the block $..S_{15}$.  Referring to Table \ref{thelayout}, we see that the role of $S$ here is played by $S = S_{156}\cup S_{157}$, so $f(S)$ occupies the two blocks 
immediately preceding the block $..S_{15}$.  Thus $..S_{15} = f(S_{15} - S)$ is the interval of length $|S_{15} - S|$ in the position of block $..S_{15}$; specifically, 
$..S_{15} = f(S_{15} - S) = [|S_{12}\cup S_{156}\cup S_{157}| + 1, |S_{12}\cup S_{15}|]$.  Note that 
$f(S)$ together with $..S_{15}$ is a sequence of three consecutive blocks constituting the image $f(S_{15})$.  Referring to Table \ref{thelayout}, we see further that    
$f(S_{156}) = [ |S_{12}|+1, |S_{12}|+|S_{156}| ]$, and $f(S_{157}) = [ |S_{12}|+|S_{156}| +1, |S_{12}|+|S_{156}|+|S_{157}| ]$. As another example, for the the block $S_{1}''..$, the role of $S$ is played 
by $S = S_{168}\cup S_{167}\cup S_{14}..\cup S_{146}\cup S_{145}\cup S_{13}$, so $f(S)$ occupies the six consecutive blocks immediately following the block 
$S_{1}''..$.  We have $S_{1}''.. = f(S_{1}'' - S) = [\binom{n}{r} - |S_{1}''| + 1, \binom{n}{r} - |S|]$, so $S_{1}''..$ is the first of seven consecutive blocks which altogether constitute 
$f(S_{1}'') = [\binom{n}{r} - |S_{1}''| + 1, \binom{n}{r}]$.  Similarly 
$..S_{1}'$ is the last of seven consecutive blocks constituting $f(S_{1}') = [1, |S_{1}'|]$.

 \begin{table} 
 \begin{center}
\hskip-1.0cm\begin{tabular}{|c|c|c|c|c|c|c|c|c|c|c|c|c|c|c|}
 \hline
 $S_{12}$ & $S_{156}$ & $S_{157}$ & $..S_{15}$ & $S_{189}$ & $S_{1,8,10}$ & $..S_{1}'$& $S_{346}$ & $S_{347}$ & $..S_{34}$ & $S_{356}$ & $S_{357}$ & $..S_{35}$ & $..S_{3}$ & $R$ \\
 \hline
 $R$ & $S_{2}..$ & $S_{23}..$ & $S_{236}$ & $S_{235}$ & $S_{25}..$ & $S_{259}$ & $S_{258}$ & $S_{1}''..$ & $S_{168}$ & $S_{167}$ & $S_{14}..$ & $S_{146}$ & $S_{145}$ & $S_{13}$ \\  
 \hline
 \end{tabular}
 \caption{The mapping $f$} \label{thelayout}
 \end{center}
 \end{table}

As mentioned above it suffices to show that $\partial(F)\le L(n,r)$ for each block $F$ of $f$.  Start with the special block $F = R$.  Note first 
that $|S_{3}| = \binom{n-3}{r-1} < \binom{n-2}{r-1} = |S_{2}|$, and $| |S_{1}'| - |S_{1}''| |\le 1$.  Thus Table  \ref{thelayout} shows that 
the initial subinterval of $[1, \binom{n}{r}]$ consisting of the $14$ blocks immediately preceding $R$ (i.e the interval $S_{1}'\cup S_{3}$) is shorter that the final subinterval 
of $[1, \binom{n}{r}]$ consisting of the $14$ blocks immediately following $R$ (i.e the interval $S_{1}''\cup S_{2}$).  Thus 
 $\partial(R)\le \binom{n}{r} - |S_{1}'| - |S_{3}| = \binom{n}{r} -  \frac{1}{2}\binom{n-1}{r-1} - \binom{n-3}{r-1} < L(n,r)$, as required

 Consider now any block $F\ne R$. Each such $F$ is contained in either 
 $[1, \frac{1}{2}\binom{n}{r}]$ or  $[ \frac{1}{2}\binom{n}{r}, \binom{n}{r}]$.  Since $\frac{1}{2}\binom{n}{r} < L(n,r)$, any edge $ab\in E(K(n,r))$, $a<b$, for which 
 $|f(a) - f(b)|\geq L(n,r)$ must satisfy $a\in [1,\frac{1}{2}\binom{n}{r}]$ and $b\in[ \frac{1}{2}\binom{n}{r}, \binom{n}{r}]$.  We are thus reduced to showing 
 $\partial(F)\le L(n,r)$ for each block $F= [x,y]\subset [1,\frac{1}{2}\binom{n}{r}]$.  There are $14$ such blocks, in fact the leftmost $14$ blocks in the first row of Table \ref{thelayout}.  
 Since $y < \frac{1}{2}\binom{n}{r} < L(n,r)$, by Lemma \ref{blockedintervals} 
 we are reduced to showing that for each of these blocks $F$ there is a right blocker $G$ for $F$ 
 such that $|G| + x \geq \frac{1}{2}\binom{n-1}{r-1} + 2\frac{n^{r-2}}{(r-2)!} - (r + 2)\frac{n^{r-3}}{(r-3)!} + O(n^{r-4})$.  Let $B(n,r)$ 
be the right side of the last inequality.

For each block $M$ of $f$ let $\widehat{M}$ denote the terminal subinterval of $[1,\binom{n}{r}]$ beginning with $M$. 
 As an example, we can use 
 Table \ref{thelayout} to see that $\widehat{S_{259}} = \{S_{259}\cup S_{258}\cup S_{1}''..\cup S_{168}\cup S_{167}\cup S_{14}..\cup S_{146}\cup S_{145}\cup S_{13}\}.$  
With this notation, for each block $F \subset [1, \frac{1}{2}\binom{n}{r}]$, Table \ref{shortenedblockers} gives a right blocker $G$ of $F$ (directly below $F$ in the Table)
in the form $G = \widehat{M}$ for some block $M$.  For example, the right blocker of $S_{157}$ given by Table \ref{shortenedblockers} is $\widehat{S_{235}}$.   
To verify that these $G$ are indeed right blockers, it remains only to check that $\{ F\}$ and 
the corresponding $G = \widehat{M}$ are cross intersecting families. 
To do this, it suffices to show that for each block $B$ 
appearing in $\widehat{M}$ we have $\mathcal{I}(B)\cap \mathcal{I}(F)\ne \emptyset$.  For example, when $F = S_{189}$, the corresponding right blocker 
for $F$ in Table \ref{shortenedblockers}  is $G = \widehat{S_{259}}$, and the blocks $B$ appearing in $G$ are given earlier in this paragraph.  Examining each of these blocks $B$  
(in left to right order) for the condition $\mathcal{I}(B)\cap \mathcal{I}(F)\ne \emptyset$ 
we get   
$9\in \mathcal{I}(S_{259})\cap \mathcal{I}(189), 8\in \mathcal{I}(S_{258})\cap \mathcal{I}(189),\cdots , 1\in \mathcal{I}(S_{13})\cap \mathcal{I}(189)$, 
as required.  We leave to the reader the similar verification that for blocks $F\ne R$, $F\subset [1, \frac{1}{2}\binom{n}{r}]$, we have that $\{ F\}$ and the 
corresponding $G = \widehat{M}$ given by Table \ref{shortenedblockers} are cross intersecting families.

Let then $F$ be any block of $f$ satisfying $F = [x, y] \subset [1,\frac{1}{2}\binom{n}{r}]$.  We now verify the property $|G| + x \geq B(n,r)$, where $G$  
is the right blocker of $F$ given in Table \ref{shortenedblockers}.  

First consider such blocks satisfying $F\ne S_{12}$. The crucial feature of $f$ which ensures this property for such blocks $F$ is that
the right blocker $G$ of $F$ given in Table \ref{shortenedblockers} satisfies
 \begin{equation} \label{bockerproperty}
 |G| + x  = 1 + |S_{1}''(or S_{1}')\cup S_{ab}\cup S_{cd}\cup S_{rst}\cup S_{r's't'}|, b + d = 7. 
\end{equation} 
where the five sets on the right side have pairwise empty intersection.  Suppose for a moment that this property holds for $F$ and its right blocker $G$.  The using 
Lemma  \ref{binomial-estimate} we obtain  $|G|+x = 1 + \frac{1}{2}\binom{n-1}{r-1} + \binom{n-b}{r-2} + \binom{n-d}{r-2} + \binom{n-t}{r-3} + \binom{n-t'}{r-3} =   
1 + \frac{1}{2}\binom{n-1}{r-1} + 2\frac{n^{r-2}}{(r-2)!} -  (\frac{b+d+r-3-2}{(r-3)!})n^{r-3} + O(n^{r-4}) = B(n,r)$ 
since $b+d = 7$, as required.  

It remains to verify that (\ref{bockerproperty}) holds for these $F$ (and their corresponding $G$).  We do this for three cases, and leave the verification of the others to the reader.
Consider first $F = S_{157}$.  From Table \ref{thelayout} we have $x = 1 +  |S_{12}\cup S_{156}|$.  Table \ref{shortenedblockers} gives the right blocker for $F$ 
given by $G =  \widehat{S_{235}} = S_{235}\cup S_{25}\cup S_{1}''$.  Thus 
$|G| + x = 1 + |S_{1}''\cup S_{25}\cup S_{12}\cup S_{156}\cup S_{235}|$, as required by (\ref{bockerproperty}).  As a second example let    
$F = S_{1,8,10}$.  Table \ref{thelayout} gives $x = 1 +  |S_{12}| + |S_{15}| + |S_{189}|$, and 
Table \ref{shortenedblockers} gives the right blocker $G = \widehat{S_{258}} = S_{258}\cup S_{1}''$ of $S_{1,8,10}$ of $F$.  Thus we obtain  
$|G|+x = 1 + |S_{1}''\cup S_{12}\cup S_{15}\cup S_{189}\cup S_{258}|$, thereby verifying (\ref{bockerproperty}) for $F = S_{1,8,10}$.  Finally consider 
$F = S_{347}$.  Using the Tables Table \ref{thelayout} and \ref{shortenedblockers}  we have $x = 1 + |S_{1}'\cup S_{346}|$, while 
the right blocker for $F$ is $G = \widehat{S_{167}} = S_{167}\cup S_{14}\cup S_{13}$.  So finally $|G| + x = 1 + |S_{1}'\cup S_{14}\cup S_{13}\cup S_{167}\cup S_{346}|$, 
as required by (\ref{bockerproperty}).    
We leave to the reader the similar 
verification that (\ref{bockerproperty}) holds for the remaining blocks in the first row of Table \ref{thelayout} satisfying $F\ne S_{12}$, $F\ne R$.

Finally in the case $F = S_{12}$, Table \ref{shortenedblockers} gives the right blocker $G = \widehat{S_{2}} = S_{2}\cup S_{1}''$.  Hence
 $|G| + x > |G| = |S_{2}| + |S_{1}''| = \binom{n-2}{r-1} +   \frac{1}{2}\binom{n-1}{r-1} > B(n,r)$.      \end{proof}
  
\begin{table}
\begin{center}
\hskip-1.0cm\begin{tabular}{|c||c|c|c|c|c|c|c|c|c|c|c|c|c|c|}
 \hline
 $F$ & $S_{12}$ & $S_{156}$ & $S_{157}$ & $..S_{15}$ & $S_{189}$ & $S_{1,8,10}$ & $..S_{1}'$& $S_{346}$ & $S_{347}$ & $..S_{34}$ & $S_{356}$ & $S_{357}$ & $..S_{35}$ & $..S_{3}$ \\
 \hline
 $G$ & $\widehat{S_{2}..}$ & $\widehat{S_{236}}$ & $\widehat{S_{235}}$ & $\widehat{S_{25}..}$ & $\widehat{S_{259}}$ & $\widehat{S_{258}}$ & $\widehat{S_{1}''..}$ & $\widehat{S_{168}}$ & $\widehat{S_{167}}$ & 
 $\widehat{S_{14}..}$ & $\widehat{S_{146}}$ & $\widehat{S_{145}}$ & $\widehat{S_{145}}$ & $\widehat{S_{13}}$ \\ 
 \hline
 \end{tabular}
 \caption{block $F$ of $f$, right blocker $G$ for $F$} \label{shortenedblockers}
 \end{center}
 \end{table}

 %%%%%%%%%%%%%%%%%%%%%%%%%%%%

\end{document}